\documentclass[a4paper,11pt]{article}
\usepackage[a4paper,left=3cm,right=3cm,top=3cm,bottom=3cm]{geometry}
\usepackage{amsmath, amssymb, amsthm}
\numberwithin{equation}{section} 

\usepackage{newtxtext,newtxmath}
\usepackage{mathtools}
\usepackage{physics}
\usepackage{esint}
\usepackage{subcaption}
\usepackage{graphicx}
\usepackage{enumitem}
\usepackage{xcolor}
\usepackage{hyperref}
\usepackage{cleveref}
\usepackage{comment}
\usepackage{orcidlink}
\usepackage{needspace}

\usepackage[numbers]{natbib}

\title{Addendum to\\
Spectral bounds for the operator pencil\\ of an elliptic system in an angle}
\author{
Michael Tsopanopoulos\, \orcidlink{0009-0007-3167-6862}\thanks{Weierstrass Institute for Applied Analysis and Stochastics, Anton-Wilhelm-Amo-Str. 39, 10117 Berlin, Germany.\\
\texttt{tsopanopoulos@wias-berlin.de}}
}

\newtheorem{theorem}{Theorem}[section]
\newtheorem{lemma}[theorem]{Lemma}
\newtheorem{proposition}[theorem]{Proposition}
\newtheorem{corollary}[theorem]{Corollary}
\theoremstyle{definition}

\theoremstyle{remark}
\newtheorem*{remark}{\textbf{Remark}}

\newcommand{\Mat}{\operatorname{Mat}}
\newcommand{\Id}{\operatorname{Id}}

\begin{document}

\maketitle

\section{Summary}
This addendum complements \cite{Tsopanopoulos2025} and uses the notation and conventions introduced there. In \cite{Tsopanopoulos2025}, the model problem in a plane angle $\mathcal{K}_\alpha$ was studied for solutions of the form
\begin{align*}
    u_\lambda = r^\lambda v ,
\end{align*}
subject to Dirichlet, mixed, and Neumann boundary conditions. For each of these boundary conditions, the existence of a nontrivial solution was reduced to a finite-dimensional spectral condition
\begin{align*}
    0\in \sigma(M_{\lambda,\alpha}),
\end{align*}
where the matrix $M_{\lambda,\alpha}$ depends on the boundary condition, the opening angle $\alpha$, and the standard root of the elliptic tuple. In Sections 6 and 7 of \cite{Tsopanopoulos2025}, numerical range methods and accretive-operator theory were used to derive lower bounds on $|\Re\lambda|$ for Dirichlet and mixed boundary conditions. For Neumann boundary conditions, however, no such bounds were proved within the framework; instead, the discussion was deferred to the existing literature.

The purpose of this addendum is to close this gap. We show that, under the formal positivity assumption on the elliptic tuple, the Neumann problem satisfies the same lower bounds as the Dirichlet problem, apart from the constant solutions corresponding to $\lambda=0$. More precisely, for nontrivial Neumann solutions $r^\lambda v$ with $\lambda\neq 0$, we prove that
\begin{align*}
    |\Re\lambda|\geq 1 \quad \text{for } 0<\alpha\leq \pi\quad \text{and}\quad |\Re\lambda|\geq \frac12 \quad \text{for } \pi<\alpha\leq 2\pi.
\end{align*}

\begin{figure}[h!]
    \centering
    \includegraphics[width=0.6\linewidth]{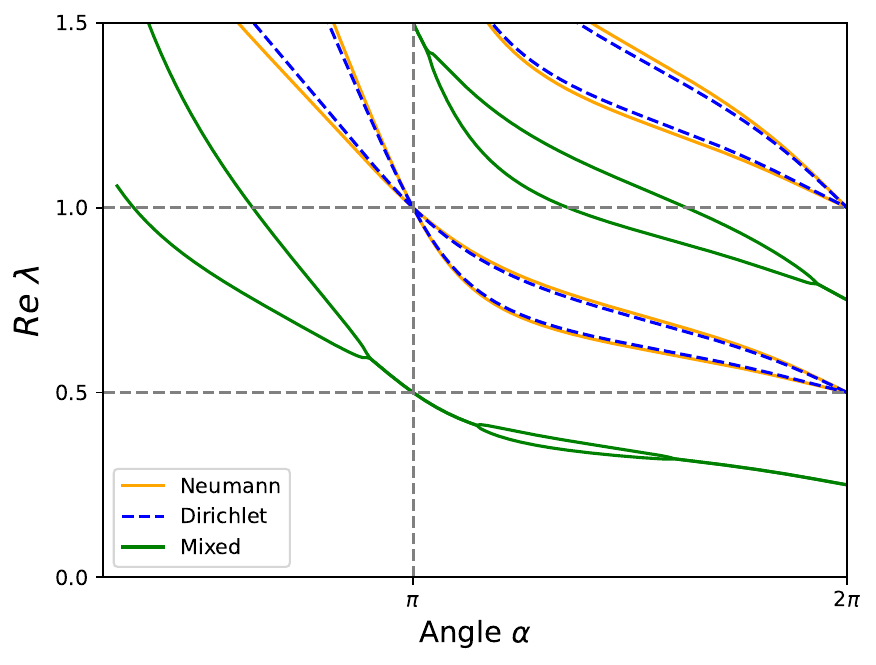}
    \caption{\small Numerically computed branches of $\Re \lambda$ as a function of $\alpha\in [1,2\pi]$ for different boundary conditions. The elliptic tuple is defined by $A_{11}=\begin{pmatrix}
        5&0.6\\0.6&1.5
    \end{pmatrix}$, $A_{12}=\begin{pmatrix}
        0.25&-0.4\\-0.4&-0.2
    \end{pmatrix}$, $A_{22}=\begin{pmatrix}
        1&0\\0&1
    \end{pmatrix}$. The branches for Dirichlet and Neumann boundary conditions are very close to each other.}
    \label{fig:1}
\end{figure}

Thus the framework developed in \cite{Tsopanopoulos2025} can handle Dirichlet, mixed, and Neumann boundary conditions. This is conceptually useful because, in other approaches, such as \cite{VMR2001SPCS}, the treatments of Dirichlet and Neumann boundary conditions are rather different (and mixed boundary conditions are not treated).

The addendum also gives a cleaner presentation of the matrix analysis used in \cite{Tsopanopoulos2025}. A key ingredient is the fractional-power implication
$$
W(F)\subset \operatorname{UHP}
\quad\Longrightarrow\quad
W(F^\lambda)\subset \operatorname{UHP},
\qquad 0<\lambda\leq 1.
$$
In \cite{Tsopanopoulos2025}, this was obtained indirectly from results on accretive operators. Here we give a direct proof based on the Cayley transform and von Neumann's inequality using ideas found in \cite{Haas2003FCSO}. Section \ref{auxi} collects the required auxiliary material, Section \ref{impro} proves the fractional-power estimates, and Section \ref{revdbc} revisits the matrix argument behind the Dirichlet bound in a form suited to the Neumann case. Section \ref{revellcon} records the ellipticity conditions needed in the main proof, and Section \ref{revneumannbound} proves the Neumann bounds.

\section{Auxiliary material for the numerical range}\label{auxi}
Denote by
\begin{align*}
    C(z)\coloneqq (z-i)(z+i)^{-1}
\end{align*}
the Cayley transform. It maps $\operatorname{UHP}=\{z\in \mathbb{C}: \Im z>0\}$ biholomorphically to the unit disk $\mathbb{D}=\{z\in \mathbb{C}: |z|<1\} $. We shall also use the function
\begin{align*}
    \varphi_\lambda(z) = z^\lambda = \exp(\lambda\log(z)),
\end{align*}
where $\log(z) = \log(|z|)+i \arg(z)$ denotes the principal branch with $\arg(z)\in (-\pi,\pi]$. Since the spectrum of the matrices below will be contained in $\operatorname{UHP}$, this branch is well-defined for the corresponding functional calculus. Observe that
\begin{align}\label{ganar}
    \operatorname{sgn}(\lambda)~\varphi_\lambda(\operatorname{UHP})
    &\subset \operatorname{UHP},
    \qquad \lambda\in [-1,1]\setminus \{0\},\\ \nonumber
    \varphi_{i\lambda}(\operatorname{UHP})
    &\subset \mathbb D,
    \qquad \quad~ \lambda>0.
\end{align}

We have the following characterisation for matrices $F$ which satisfy $W(F)\subset \operatorname{UHP}$.  Compare with Prop. B.20 (iv) in \cite{Haas2003FCSO} which characterizes accretive operators, that is, $W(F)\subset \operatorname{RHP}$.

\begin{lemma}\label{cotra}
    Let $F\in \Mat_\ell(\mathbb{C})$. Then $W(F)\subset \operatorname{UHP}$ if and only if $C(F)$ is a contraction, that is, $\|C(F)\|<1$.
\end{lemma}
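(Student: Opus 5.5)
The plan is to reduce the contraction condition to a quadratic-form inequality by the standard Cayley-transform computation. I treat both directions at once, working with the Hermitian form of $F$ on $\mathbb{C}^\ell$. Throughout, $W(F)$ is the numerical range $\{\langle Fx,x\rangle : \|x\|=1\}$.

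\textbf{Step 1: $F+i$ is invertible.} First I show that either side of the equivalence forces $F+i$ to be invertible, so that $C(F)=(F-i)(F+i)^{-1}$ is well defined.
\begin{itemize}
\item If $W(F)\subset \operatorname{UHP}$, then $\sigma(F)\subset W(F)\subset\operatorname{UHP}$, so $-i\notin\sigma(F)$.
\item Conversely, the statement $\|C(F)\|<1$ presupposes that $C(F)$ is defined, which already requires $F+i$ invertible.
\end{itemize}

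\textbf{Step 2: the key identity.} Fix $x\in\mathbb{C}^\ell$ and put $y=(F+i)^{-1}x$, so that $x=(F+i)y$ and $C(F)x=(F-i)y$. Expanding the two norms gives
\begin{align*}
\|(F+i)y\|^2-\|(F-i)y\|^2 = 2\,\Re\langle Fy, iy\rangle+2\,\Re\langle iy,Fy\rangle = 4\,\Im\langle Fy,y\rangle .
\end{align*}
Let me check the sign. We have $\langle Fy,iy\rangle=-i\langle Fy,y\rangle$, whose real part is $\Im\langle Fy,y\rangle$. Likewise $\langle iy,Fy\rangle=i\,\overline{\langle Fy,y\rangle}$, whose real part is also $\Im\langle Fy,y\rangle$. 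The total is therefore $4\,\Im\langle Fy,y\rangle$, as claimed. Hence
\begin{align*}
\|x\|^2-\|C(F)x\|^2 = 4\,\Im\langle Fy,y\rangle, \qquad y=(F+i)^{-1}x .
\end{align*}

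\textbf{Step 3: forward direction.} Assume $W(F)\subset\operatorname{UHP}$. Then the right-hand side of the identity is strictly positive for every $x\neq0$, so $\|C(F)x\|<\|x\|$ for all $x\neq 0$. Because we are in finite dimensions, the unit sphere is compact and the map $x\mapsto\|C(F)x\|$ attains its maximum on it. This gives $\|C(F)\|<1$.

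\textbf{Step 4: converse.} Assume $\|C(F)\|<1$. Given any $y\neq0$, set $x=(F+i)y$; this is nonzero since $F+i$ is invertible. Then $\|C(F)x\|<\|x\|$, and the identity yields $\Im\langle Fy,y\rangle>0$. Since $y$ was arbitrary, $W(F)\subset\operatorname{UHP}$.

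\textbf{Where care is needed.} The computation itself is routine. The one point to watch is the strict-versus-non-strict passage. Pointwise strict contraction gives strict operator norm only because of finite dimensionality, via compactness of the unit sphere. The same point matters for the openness of $\operatorname{UHP}$ and the compactness of $W(F)$, which is what makes Step 3 work with strict inequalities throughout.
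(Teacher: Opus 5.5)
Your proof is correct and is essentially the paper's argument. You use the same substitution $x=(F+i\Id_\ell)y$, the same identity $\|x\|^2-\|C(F)x\|^2=4\Im\langle Fy,y\rangle$, and the same appeal to finite dimensionality (compactness of the unit sphere) to pass from $\|C(F)x\|<\|x\|$ for every $x\neq 0$ to $\|C(F)\|<1$. Your explicit remark that the converse presupposes invertibility of $F+i\Id_\ell$ makes visible what the paper leaves implicit, whereas the closing comment about compactness of $W(F)$ and openness of $\operatorname{UHP}$ is unnecessary, since only compactness of the unit sphere is used.
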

\begin{proof}
Note that $F+i\Id_\ell$ is invertible since $\sigma(F)\subset W(F)\subset \operatorname{UHP}$. Write $y=(F+i\Id_\ell)x$ and observe
    \begin{align*}
        \|C(F)y\|^2-\|y\|^2 = \|(F-i\Id_\ell)x\|^2 - \|(F+i\Id_\ell)x\|^2 = -4 \Im \langle Fx,x\rangle.
    \end{align*}
Thus, $\Im\langle Fx,x\rangle>0$ for all $x\neq 0$ if and only if $\|C(F)y\|<\|y\|$ for all $y\neq 0$. In finite dimension this is equivalent to $\|C(F)\|<1$.
\end{proof}

We will use von Neumann's inequality which we recall for the convenience of the reader.

\begin{theorem}[von Neumann]\label{vneu}
Let $T:H\to H$ be a bounded linear operator on a Hilbert space $H$ with $\|T\|\leq 1$. Then, for every polynomial $p$,
$$
\|p(T)\| \leq \sup_{|z|\leq 1} |p(z)|\coloneqq \|p\|_\infty.
$$
\end{theorem}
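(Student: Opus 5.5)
The plan is the classical dilation-theoretic argument: realise the contraction $T$ as the compression of a unitary operator, and then use the spectral theorem for that unitary.

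\emph{Step 1 (unitary dilation).} Set $D_T=(\Id-T^*T)^{1/2}$ and $D_{T^*}=(\Id-TT^*)^{1/2}$; both are well defined because $\|T\|\leq 1$. On $K=\ell^2(\mathbb{Z};H)$ we build the Sz.-Nagy--Schäffer unitary $U$. It is the bilateral shift, modified only at the coordinates $0$ and $-1$ by the $2\times 2$ Halmos block
$$
\begin{pmatrix} T & D_{T^*}\\ D_T & -T^*\end{pmatrix}.
$$
There are two things to verify. First, $U$ is unitary; this comes down to the intertwining identity $TD_T=D_{T^*}T$, which follows from $T(T^*T)^k=(TT^*)^kT$ and polynomial approximation of the square root. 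Second, $P_HU^nP_H=T^n$ for all $n\geq 0$, where $H$ is identified with the $0$-th coordinate.

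Step 1 is the main obstacle, because the index bookkeeping needed to show $P_HU^n|_H=T^n$ for all $n$ can go wrong. I would handle it by choosing the embedding so that the "incoming" part of $K$ is orthogonal to $H$ and invariant under $U^*$, and the "outgoing" part is invariant under $U$. Then $H$ is semi-invariant, and the compression map $X\mapsto P_HX|_H$ is multiplicative on the algebra generated by $U$.

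\emph{Step 2 (polynomials).} By linearity, Step 1 gives $p(T)=P_Hp(U)|_H$ for every polynomial $p$. Compressing to $H$ does not increase the norm, so $\|p(T)\|\leq\|p(U)\|$.

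\emph{Step 3 (spectral theorem).} Since $U$ is unitary, it is normal with $\sigma(U)\subset\partial\mathbb{D}$. The continuous functional calculus for normal operators therefore gives
$$
\|p(U)\|=\sup_{z\in\sigma(U)}|p(z)|\leq\sup_{|z|=1}|p(z)|=\|p\|_\infty,
$$
where the last equality is the maximum modulus principle.

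\emph{Remark (finite-dimensional case).} The paper only needs $H=\mathbb{C}^\ell$, and there is a possible simplification in that setting. One could try to avoid infinite dilations by using Egerváry's finite $(N+1)\ell$-dimensional unitary dilation, which satisfies $P_HU^kP_H=T^k$ for $0\leq k\leq N$. This suffices for polynomials of degree at most $N$. The argument is otherwise identical, and the unitarity check for the block matrix is elementary.
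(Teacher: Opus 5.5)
The paper does not prove this theorem. It recalls von Neumann's inequality as a classical result and uses it only as a black box in Corollary~\ref{nagut}, so there is no proof in the paper to compare against. Your proposal is correct on its own terms: it is the standard Sz.-Nagy--Sch\"affer dilation proof, and the step you flag as delicate works out.

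Write $H_n$ for the $n$-th copy of $H$ in $K$. Your block occupies rows $\{0,1\}$ and columns $\{0,-1\}$; for $T=0$ it is exactly the bilateral shift. It is unitary on $H\oplus H$ because $D_T^2=\Id-T^*T$, $D_{T^*}^2=\Id-TT^*$ and $TD_T=D_{T^*}T$. The operator $U$ is then unitary because it maps $\bigl(\bigoplus_{n\le-2}H_n\bigr)\oplus(H_{-1}\oplus H_0)\oplus\bigl(\bigoplus_{n\ge1}H_n\bigr)$ isometrically onto $\bigl(\bigoplus_{n\le-1}H_n\bigr)\oplus(H_0\oplus H_1)\oplus\bigl(\bigoplus_{n\ge2}H_n\bigr)$.

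Both $\bigoplus_{n\ge0}H_n$ and $\bigoplus_{n\ge1}H_n$ are $U$-invariant, so $H_0$ is semi-invariant. Sarason's lemma then gives $P_{H_0}U^n|_{H_0}=(P_{H_0}U|_{H_0})^n=T^n$. The compression is multiplicative only on polynomials in $U$, not on the $C^*$-algebra generated by $U$, but polynomials are all you use.

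Steps~2 and~3 are correct. In Step~3 you only need the trivial bound $\sup_{|z|=1}|p(z)|\le\sup_{|z|\le1}|p(z)|$, not the maximum modulus principle.

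Compared with simply citing the result, your route buys self-containedness at the cost of infinite-dimensional machinery the paper does not need. The paper applies the inequality only to strict contractions on $\mathbb{C}^\ell$ and to Taylor polynomials of functions holomorphic near $\overline{\mathbb{D}}$. Your Egerv\'ary remark, with a separate finite unitary dilation for each degree $N$, therefore already suffices.

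A dilation-free alternative, closer to the Cayley-transform arguments of Section~\ref{auxi}, is the Herglotz--Poisson argument. It rests on the identity
\[
\Re\bigl((w+T)(w-T)^{-1}\bigr)=\bigl((w-T)^{-1}\bigr)^*(\Id-T^*T)(w-T)^{-1}\ge 0,\qquad |w|=1,\ \|T\|<1 .
\]
By the Schwarz integral formula, this shows that $f(T)$ is accretive whenever $f$ is holomorphic near $\overline{\mathbb{D}}$ with positive real part on $\partial\mathbb{D}$. A Cayley transform then turns this into the contraction estimate of Corollary~\ref{nagut}, essentially without passing through polynomials.
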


The following consequence is the form in which we use von Neumann's inequality.

\begin{corollary}\label{nagut}
Let $T:H\to H$ be a bounded linear operator with $\|T\|<1$. Let $h:\mathbb{D}\to \mathbb{D}$ be holomorphic. Then
$$
\|h(T)\| <1.
$$
\end{corollary}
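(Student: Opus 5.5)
The strategy is to reduce the statement to von Neumann's inequality (Theorem \ref{vneu}) applied to a rescaled operator of norm at most $1$, and to obtain the \emph{strict} inequality from compactness of a smaller closed disk.

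Set $r\coloneqq\|T\|<1$. If $r=0$, then $T=0$ and $h(T)=h(0)\Id$, which has norm $|h(0)|<1$. So assume $0<r<1$ and put $S\coloneqq T/r$, so that $\|S\|=1$. Define $g(z)\coloneqq h(rz)$. This function is holomorphic on the disk $\{|z|<1/r\}$, which strictly contains the closed unit disk. Write $h(z)=\sum_{n\ge0}a_nz^n$ for the Taylor expansion of $h$ on $\mathbb D$, whose radius of convergence is at least $1$. Then $g(z)=\sum_n a_nr^nz^n$. Its Taylor polynomials $p_N(z)=\sum_{n\le N}a_nr^nz^n$ converge uniformly to $g$ on $\{|z|\le1\}$, because $\sum_n|a_n|r^n<\infty$.

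The key quantitative step uses compactness. The closed disk $\{|w|\le r\}$ is a compact subset of $\mathbb D$, $h$ is continuous, and $h(\mathbb D)\subset\mathbb D$. Hence
\begin{align*}
    \|g\|_\infty=\max_{|w|\le r}|h(w)|\eqqcolon c<1 .
\end{align*}
Applying Theorem \ref{vneu} to $S$ and $p_N$ gives $\|p_N(S)\|\le\|p_N\|_\infty$. By uniform convergence, $\|p_N\|_\infty\to c$. We also have $p_N(S)=\sum_{n\le N}a_nT^n$, and this converges in operator norm to $\sum_n a_nT^n$, since $\|a_nT^n\|\le|a_n|r^n$ is summable. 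Passing to the limit yields $\|h(T)\|\le c<1$.

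The only point requiring care is identifying $h(T)$, as it appears in the statement, with the norm-convergent power series $\sum_n a_nT^n$. Since $\sigma(T)\subset\{|z|\le r\}\subset\mathbb D$, the holomorphic (Riesz--Dunford) functional calculus is defined. It agrees with the power series because the Cauchy integral over a circle of radius $\rho\in(r,1)$ can be expanded termwise, using $(\zeta-T)^{-1}=\sum_n\zeta^{-n-1}T^n$ for $|\zeta|=\rho$. I expect this identification, together with obtaining strictness via the bound $c<1$, to be the main things to get right. The rest is a direct application of Theorem \ref{vneu}.
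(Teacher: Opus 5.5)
Your proof is correct and follows the paper's argument essentially verbatim. You rescale to $S=T/\|T\|$, apply von Neumann's inequality to polynomial approximants of $h(rz)$ on the closed unit disk, and get strictness from $\max_{|w|\le r}|h(w)|<1$ by compactness; your Taylor partial sums and the identification of $h(T)$ with the norm-convergent series $\sum_n a_nT^n$ simply spell out the ``polynomial approximation'' step the paper leaves implicit.
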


\begin{proof}
    Set $r\coloneqq \|T\|<1$. If $r=0$, then $T=0$ and $\|h(T)\|=|h(0)|<1$. Assume now $r>0$. Put $S= T/r$ so $\|S\|\leq 1$. Define $q(z) = h(rz)$. Since $h$ is holomorphic on $\mathbb{D}$, $q$ is holomorphic on a neighbourhood of $\overline{\mathbb{D}}$. Hence, by polynomial approximation and von Neumann's inequality
    \begin{align*}
        \|h(T)\| = \|q(S)\| \leq \sup_{|z|\leq 1}|q(z)| = \sup_{|w|\leq r}|h(w)|<1,
    \end{align*}
    where the last inequality follows since $\{|w|\leq r\}$ is compactly contained in $\mathbb{D}$.
\end{proof}

\section{Revisiting fractional power estimates}\label{impro}
We now prove revised versions of Lemma 6.3 and Lemma 6.4 in \cite{Tsopanopoulos2025}. The ideas are based on the proof of \cite[Prop. 4.13]{Haas2003FCSO}.

\begin{lemma}\label{dopamine}
    Consider $F\in \Mat_\ell(\mathbb{C})$ with $W(F)\subset \operatorname{UHP}$. Then 
    \begin{enumerate}
        \item[i)] $W(F^\lambda)\subset \operatorname{sgn}(\lambda)\operatorname{UHP}$ for $\lambda\in [-1,1]\setminus \{0\}$,
        \item[ii)] $\|F^{i\lambda}\|<1$ for $\lambda>0$,
        \item[iii)] $\operatorname{sgn}(\lambda) (\operatorname{Id}_\ell -  F^{i\lambda} (F^{i\lambda})^* )>0$ for $\lambda\in \mathbb{R}\setminus \{0\}$.
    \end{enumerate}
\end{lemma}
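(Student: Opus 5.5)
The three statements all follow one pattern. We transfer the condition $W(F)\subset\operatorname{UHP}$ to the unit disk with the Cayley transform (Lemma \ref{cotra}). We then compose with a holomorphic self-map of $\mathbb{D}$ and apply Corollary \ref{nagut}. Finally we transfer back.

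Put $T\coloneqq C(F)$. By Lemma \ref{cotra}, $\|T\|<1$, so $\sigma(T)\subset\mathbb{D}$. The inverse Cayley transform $C^{-1}(w)=i(1+w)(1-w)^{-1}$ maps $\mathbb{D}$ biholomorphically onto $\operatorname{UHP}$. Since $1\notin\sigma(T)$, the holomorphic functional calculus gives $F=C^{-1}(T)$.

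For any holomorphic $g$ on $\operatorname{UHP}$, the composition rule of the functional calculus gives $g(F)=(g\circ C^{-1})(T)$. We always use the principal branch for $F^\lambda$ and $F^{i\lambda}$; this is legitimate because $\sigma(F)\subset\operatorname{UHP}$, which avoids the branch cut.

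\textbf{Part ii).} Take $h\coloneqq\varphi_{i\lambda}\circ C^{-1}$ with $\lambda>0$. By \eqref{ganar}, $h$ is a holomorphic map $\mathbb{D}\to\mathbb{D}$. Corollary \ref{nagut} then gives $\|F^{i\lambda}\|=\|h(T)\|<1$.

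\textbf{Part i).} Take $\lambda\in[-1,1]\setminus\{0\}$ and set $G\coloneqq\operatorname{sgn}(\lambda)F^\lambda$. By \eqref{ganar}, $\operatorname{sgn}(\lambda)\varphi_\lambda$ maps $\operatorname{UHP}$ into $\operatorname{UHP}$. Hence $h\coloneqq C\circ(\operatorname{sgn}(\lambda)\varphi_\lambda)\circ C^{-1}$ is holomorphic $\mathbb{D}\to\mathbb{D}$. Corollary \ref{nagut} gives $\|h(T)\|<1$, and $h(T)=C(G)$. By the converse direction of Lemma \ref{cotra}, $W(G)\subset\operatorname{UHP}$, i.e. $W(F^\lambda)\subset\operatorname{sgn}(\lambda)\operatorname{UHP}$.

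Applying that converse direction requires $G+i\Id_\ell$ to be invertible. This holds because $\sigma(G)=\operatorname{sgn}(\lambda)\varphi_\lambda(\sigma(F))\subset\operatorname{UHP}$ by the spectral mapping theorem. The proof of Lemma \ref{cotra} also runs verbatim in the backwards direction under this invertibility.

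\textbf{Part iii).} For $\lambda>0$, part ii) gives $\|(F^{i\lambda})^*\|=\|F^{i\lambda}\|<1$. Hence $\langle F^{i\lambda}(F^{i\lambda})^*x,x\rangle=\|(F^{i\lambda})^*x\|^2<\|x\|^2$ for $x\neq0$, that is, $\Id_\ell-F^{i\lambda}(F^{i\lambda})^*>0$.

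For $\lambda<0$, the principal-branch functional calculus gives $F^{i\lambda}=(F^{-i\lambda})^{-1}=(F^{i|\lambda|})^{-1}$. Write $B\coloneqq F^{i|\lambda|}$, so that $\|B\|<1$ and $F^{i\lambda}=B^{-1}$. Then $F^{i\lambda}(F^{i\lambda})^*=(B^*B)^{-1}$. Since $0<B^*B<\Id_\ell$, operator monotonicity of inversion gives $(B^*B)^{-1}>\Id_\ell$. This is exactly $-(\Id_\ell-F^{i\lambda}(F^{i\lambda})^*)>0$.

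\textbf{Main obstacle.} The step needing most care is the bookkeeping in the functional calculus:
\begin{itemize}
\item justifying $g(C^{-1}(T))=(g\circ C^{-1})(T)$;
\item justifying the identities $F^{i\lambda}F^{-i\lambda}=\Id_\ell$ and $C(F)=T$ with the principal branch;
\item checking the precise mapping properties in \eqref{ganar} at the boundary cases $\lambda=\pm1$.
\end{itemize}
The first two are standard consequences of the holomorphic functional calculus for matrices. The mapping claim for $\lambda=\pm1$ is elementary: $\varphi_1$ is the identity, and $-z^{-1}$ maps $\operatorname{UHP}$ into $\operatorname{UHP}$.
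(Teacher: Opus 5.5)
Your proof is correct and follows the paper's argument essentially step for step. In i) you apply Corollary~\ref{nagut} to $h=C\circ(\operatorname{sgn}(\lambda)\varphi_\lambda)\circ C^{-1}$, in ii) to $h=\varphi_{i\lambda}\circ C^{-1}$, and in iii) you use the norm bound from ii) together with monotonicity of inversion for $\lambda<0$, all exactly as the paper does. Two details you handle explicitly are left implicit in the paper: checking that $G+i\Id_\ell$ is invertible before invoking the converse of Lemma~\ref{cotra}, and noting that for $\lambda<0$ the relevant matrix is $(B^*B)^{-1}$ rather than $(BB^*)^{-1}$.
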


\begin{proof}
    i) First, let $0<\lambda\leq 1$. By Lemma \ref{cotra} it suffices to show that $(C\circ \varphi_\lambda) (F)$ is a contraction. Write
    \begin{align*}
        (C\circ \varphi_\lambda) (F) = (h \circ C) (F),\quad h\coloneqq C \circ \varphi_\lambda \circ C^{-1}.
    \end{align*}
    Note that $h$ is holomorphic and satisfies $h(\mathbb{D})\subset \mathbb{D}$. Moreover, by Lemma \ref{cotra}, $C(F)$ is a contraction. The statement follows by Cor. \ref{nagut}. The statement for $-1\leq \lambda<0$ follows by a similar argument, showing $W(-F^{\lambda})\subset \operatorname{UHP}$ by using (\ref{ganar}) and
    \begin{align*}
        C(-F^{\lambda}) = (h \circ C)(F) \quad \text{for }h = C\circ (-\varphi_{\lambda})\circ C^{-1}.
    \end{align*}

    ii) Let $\lambda>0$. We write $F^{i\lambda} =(h\circ C) (F)$ for $h\coloneqq \varphi_{i\lambda} \circ C^{-1}$. Note that $h(\mathbb{D})\subset \mathbb{D}$. We conclude again by Cor. \ref{nagut}.

    iii) For $\lambda>0$, the statement follows from $
    \| F^{i\lambda} (F^{i\lambda})^* \| = \|F^{i\lambda}\|^2 \stackrel{\text{ii)}}{<}1$ since $ F^{i\lambda} (F^{i\lambda})^*$ is positive semidefinite. For $\lambda<0$, the statement is derived from the case $\lambda>0$ and the fact
    \begin{align*}
        \Id_\ell - B>0 \iff B^{-1}-\Id_\ell>0 \quad \text{for $B>0$}.
    \end{align*}
\end{proof}

\section{Revisiting matrix analysis for Dirichlet boundary conditions}\label{revdbc}
The following arguments will make use of properties \textbf{N1}-\textbf{N8} of the numerical range given in \cite[Section 6.1]{Tsopanopoulos2025}. The next result is a revised version of \cite[Thm. 6.5]{Tsopanopoulos2025}.
\begin{theorem}\label{dirri}
    Consider $F\in \Mat_\ell(\mathbb{C})$ with $W(F)\subset \operatorname{UHP}$, $\lambda\in \mathbb{C}\setminus \{0\}$ with $|\Re \lambda|\leq 1 $, and define $M_\lambda=F^{\lambda}-(F^*)^{\lambda}$. Then $0\notin \sigma(M_\lambda)$.
\end{theorem}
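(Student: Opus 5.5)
The plan is to argue by contradiction. Suppose $M_\lambda x=0$ for some $x\neq 0$, and write $\lambda=a+ib$ with $a,b\in\mathbb{R}$, $|a|\leq 1$, $(a,b)\neq(0,0)$.

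\textbf{Factorisation.} Since all the powers are defined by the holomorphic functional calculus of the single matrix $F$ (spectrum in $\operatorname{UHP}$, principal branch), we have
\begin{align*}
F^\lambda=F^{ib}F^{a}=F^aF^{ib}.
\end{align*}
Taking adjoints in the functional calculus gives $(F^*)^{\mu}=(F^{\bar\mu})^*$. Hence
\begin{align*}
(F^*)^\lambda=(F^*)^a(F^*)^{ib}=(F^a)^*\big((F^{ib})^*\big)^{-1},
\end{align*}
using $(F^*)^{ib}=(F^{-ib})^*$ and $F^{-ib}=(F^{ib})^{-1}$. Set $G:=F^{ib}$, which is invertible, and $H:=F^a$, with the convention $H=\Id_\ell$ if $a=0$. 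The equation $M_\lambda x=0$ becomes
\begin{align*}
GHx=H^*(G^*)^{-1}x.
\end{align*}
Put $y:=(G^*)^{-1}x\neq0$, so that $x=G^*y$. Then
\begin{align*}
GHG^*y=H^*y.
\end{align*}

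\textbf{Case $a\neq 0$.} Take the inner product of this identity with $y$. With $z:=G^*y\neq 0$, this gives
\begin{align*}
\langle Hz,z\rangle=\langle H^*y,y\rangle=\overline{\langle Hy,y\rangle}.
\end{align*}
Comparing imaginary parts yields
\begin{align*}
\Im\langle Hz,z\rangle=-\Im\langle Hy,y\rangle.
\end{align*}
By Lemma \ref{dopamine} i), which applies precisely because $0<|a|\leq 1$, we have $W(H)\subset\operatorname{sgn}(a)\operatorname{UHP}$. So both $\operatorname{sgn}(a)\Im\langle Hz,z\rangle$ and $\operatorname{sgn}(a)\Im\langle Hy,y\rangle$ are strictly positive. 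This contradicts the displayed identity. This case covers real $\lambda$ as well, where $G=\Id_\ell$.

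\textbf{Case $a=0$, $b\neq 0$.} Now $H=\Id_\ell$, and the equation reads $GG^*y=y$. By Lemma \ref{dopamine} iii), the matrix $\operatorname{sgn}(b)(\Id_\ell-GG^*)$ is positive definite, so $\langle(\Id_\ell-GG^*)y,y\rangle\neq0$. This contradicts $GG^*y=y$ with $y\neq 0$. For $b>0$ one can also use directly that $\|G\|<1$ by Lemma \ref{dopamine} ii).

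\textbf{Main obstacle.} The step needing the most care is the factorisation. One must justify $(F^*)^{\mu}=(F^{\bar\mu})^*$ and the multiplicativity $F^{a+ib}=F^aF^{ib}$ for the principal-branch calculus. Both follow because $\varphi_\mu(\bar z)=\overline{\varphi_{\bar\mu}(z)}$ off the negative real axis, and because $\exp((a+ib)\log z)=\exp(a\log z)\exp(ib\log z)$ pointwise on a neighbourhood of $\sigma(F)$ (and $\sigma(F^*)=\overline{\sigma(F)}$). Once this is in place, everything reduces to the numerical range statements of Lemma \ref{dopamine}.
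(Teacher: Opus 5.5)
Your proof is correct and follows the paper's route. You use the same splitting $F^\lambda=F^{ib}F^{a}$, $(F^*)^\lambda=(F^*)^{a}(F^*)^{ib}$, the same change of variables via $(F^{ib})^*$ that reduces the problem to $F^{ib}F^{a}(F^{ib})^*-(F^*)^{a}$, and then Lemma \ref{dopamine} i) for $a\neq 0$ and Lemma \ref{dopamine} iii) for $a=0$. The only difference is cosmetic: where the paper cites the numerical-range properties (congruence, $W(A+B)\subset W(A)+W(B)$, $W(F^*)=\overline{W(F)}$, $\sigma\subset W$), you write them out as a direct inner-product computation against a putative kernel vector.
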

The proof is similar to the original one and we do not repeat all details.

\begin{proof}
Decompose $\lambda$ into real and imaginary parts $\lambda=\lambda_1+i\lambda_2$ and calculate
\begin{align} \label{nase}
        0\in ~\sigma(F^\lambda-(F^*)^\lambda )=\sigma(F^{i\lambda_2}F^{\lambda_1}-(F^*)^{\lambda_1} (F^*)^{i\lambda_2} )
        \iff 0\in~ \sigma(F^{i\lambda_2}F^{\lambda_1} (F^{i\lambda_2})^*-(F^*)^{\lambda_1}  ),
\end{align}
where we use $(F^*)^{-i\lambda_2} = (F^{i\lambda_2})^* $. First, let us assume $\Re \lambda \neq 0$. Then, $0\notin \sigma(M_\lambda)$ follows from (\ref{nase}), \textbf{N3}, \textbf{N5}, and
\begin{align*}
    W(F^{i\lambda_2}F^{\lambda_1} (F^{i\lambda_2})^*) &\subset W'(F^{\lambda_1} ) \subset \operatorname{sgn}(\lambda_1)\operatorname{UHP},\\
    W(-(F^*)^{\lambda_1}) &=-W((F^{\lambda_1})^*) = -\overline{W(F^{\lambda_1})}= -\operatorname{sgn}(\lambda_1)\overline{\operatorname{UHP}} = \operatorname{sgn}(\lambda_1) \operatorname{UHP},
\end{align*}
where we use i) of Lemma \ref{dopamine} and \textbf{N2}, \textbf{N4}, \textbf{N8}. For $\lambda = it$ with $t\in \mathbb{R}\setminus \{0\}$, the result follows from (\ref{nase}) and iii) of Lemma \ref{dopamine}.
\end{proof}

\section{Revisiting ellipticity conditions}\label{revellcon}
We revisit some notions and statements from Appendix A of \cite{Tsopanopoulos2025} concerning ellipticity of a tuple $A = (A_{11},A_{12},A_{22})$. We have the following implications where neither converse implication holds in general:
\begin{align}\label{lukeli}
    \text{A formal positive}&\implies \text{A contractive Neumann well-posed}
        \implies \text{A Neumann well-posed}.
\end{align}
Let $V = (S+i\Id_\ell)D$ with symmetric $S,D\in \Mat_\ell(\mathbb{R})$ and $D>0$ be the standard root of $A$. Then formal positivity is equivalent to $M_A>0$ for
\begin{align*}
        M_A=\begin{pmatrix}
            D(S^2+\Id_\ell)D&-\frac{1}{2}(SD+DS)\\
            -\frac{1}{2}(SD+DS)&\Id_\ell
        \end{pmatrix}.
\end{align*}
Using the Schur complement, this is equivalent to the following matrix being positive definite
\begin{align}\label{boyance}
    P \coloneqq D(S^2+\operatorname{Id})D-\frac{1}{4}(SD+DS)^2.
\end{align}

Contractive Neumann well-posedness was defined by $\rho([D^{-1},SD])< 2$. Let us set
\begin{align}\label{HH}
    H \coloneqq  \frac{i}{2}D^{-1/2}[S,D]D^{-1/2}.
\end{align}
Then contractive Neumann well-posedness is equivalent to $\rho(H)<1$. Indeed, we have that
\begin{align*}
    -2i H = D^{-1/2}[S,D]D^{-1/2} = D^{1/2}[D^{-1},SD]D^{-1/2}.
\end{align*}
Since $[S,D]^T=-[S,D]$, the factor \(i\) makes \(H\) self-adjoint and $\sigma(H)$ is symmetric with respect to $0$. Neumann well-posedness was defined to be $2i\notin \sigma([D^{-1},SD])$ which translates to invertibility of $\Id_\ell - H$. The preceding discussion shows the following 

\begin{lemma}\label{gemam}
Let $A$ be an elliptic tuple with standard root $V=(S+i\Id_\ell)D$. Then:
\begin{enumerate}
    \item $H$ is self-adjoint,
    \item $A$ is Neumann well-posed $\iff $ $1\notin \sigma(H)$,
    \item $A$ is contractive Neumann well-posed $\iff$ $\sigma(H)\subset (-1,1)$,
    \item $A$ is formal positive $\iff$ $P>0$.
\end{enumerate}
\end{lemma}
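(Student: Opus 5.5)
Lemma \ref{gemam} mostly collects facts from the preceding discussion, so the plan is to check each item directly from the definitions of $H$ in (\ref{HH}), of $P$ in (\ref{boyance}), and of the three well-posedness notions.

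For item 1, I use that $S$ and $D$ are real symmetric. Then
\[
[S,D]^T = DS - SD = -[S,D],
\]
so $[S,D]$ is real skew-symmetric, and $D^{-1/2}$ is real symmetric. Hence $K \coloneqq D^{-1/2}[S,D]D^{-1/2}$ is real skew-symmetric, so $K^* = -K$. It follows that
\[
H^* = -\tfrac{i}{2}K^* = \tfrac{i}{2}K = H .
\]
Since $K$ is real, $\overline{H} = -H$. As $H$ and $\overline{H}$ have conjugate spectra and $\sigma(H)\subset\mathbb R$, this gives $\sigma(H) = -\sigma(H)$, i.e.\ the spectrum is symmetric about $0$.

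For items 2 and 3, the key step is the similarity
\[
-2iH = D^{1/2}[D^{-1},SD]D^{-1/2}.
\]
I verify it by expansion: $[D^{-1},SD] = D^{-1}SD - S$, and conjugating gives
\[
D^{1/2}\bigl(D^{-1}SD - S\bigr)D^{-1/2} = D^{-1/2}SD^{1/2} - D^{1/2}SD^{-1/2} = D^{-1/2}(SD - DS)D^{-1/2} = K .
\]
Therefore
\[
\sigma([D^{-1},SD]) = \sigma(-2iH) = -2i\,\sigma(H).
\]
Item 2 follows: $2i\in\sigma([D^{-1},SD])$ exactly when $-1\in\sigma(H)$, and by the symmetry of $\sigma(H)$ this is equivalent to $1\in\sigma(H)$. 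Item 3 follows similarly: $\rho([D^{-1},SD]) = 2\rho(H)$, and since $H$ is self-adjoint, $\rho(H)<1$ is equivalent to $\sigma(H)\subset(-1,1)$.

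For item 4, I take as given (from Appendix A of the original paper) that formal positivity is equivalent to $M_A>0$. The lower-right block of $M_A$ is $\Id_\ell>0$, so the Schur complement criterion says $M_A>0$ if and only if
\[
D(S^2+\Id_\ell)D - \bigl(-\tfrac12(SD+DS)\bigr)\Id_\ell^{-1}\bigl(-\tfrac12(SD+DS)\bigr) = P
\]
is positive definite. This uses that $SD+DS$ is symmetric, so the off-diagonal blocks are adjoints of each other.

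The only potential obstacle is the dependence on the cited equivalence "formal positivity $\iff M_A>0$" from the original paper. That equivalence is assumed here rather than reproved; everything else is the routine algebra above.
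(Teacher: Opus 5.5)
Your proposal is correct and follows essentially the same route as the paper, which obtains the lemma from the discussion preceding it. That discussion gives self-adjointness of $H$ from the skew-symmetry of $[S,D]$, items 2 and 3 from the similarity $-2iH = D^{1/2}[D^{-1},SD]D^{-1/2}$ together with the symmetry of $\sigma(H)$ about $0$, and item 4 from the Schur complement of $M_A$. You additionally write out the verification of that similarity and derive $\sigma(H)=-\sigma(H)$ from $\overline{H}=-H$, both of which the paper only asserts.
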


Thus, we have reformulated all of the ellipticity conditions in (\ref{lukeli}) in terms of $P$ and $H$. Note that an ellipticity condition holds for \(A\) if and only if it holds for its monic reduction \(\widetilde A\); see \cite[Section 3.1]{Tsopanopoulos2025}.

\section{Revisiting Neumann boundary conditions}\label{revneumannbound}
In the following, assume $A$ is Neumann well-posed. We investigate the matrix 
\begin{align*}
    E = \frac{1}{2}D^{-1/2} [S,D]D^{-1/2} + i \Id_\ell = i(\operatorname{Id}_\ell - H)
\end{align*}
appearing in \cite[Prop. 5.4]{Tsopanopoulos2025}. In particular, we derive some properties of 
\begin{align}\label{kortaew}
    T \coloneqq - E^{-1}\overline{E}.
\end{align}

We can write $T$ in terms of $H$ by using the functional calculus:
\begin{align}\label{guhn}
    T =  - (i(\Id_\ell-H))^{-1} (-i(\Id_\ell +H)) =  (\Id_\ell + H ) (\Id_\ell - H)^{-1} .
\end{align}

\begin{lemma}\label{bobosition}
    Let $A$ be a Neumann well-posed elliptic tuple with standard root $V=(S+i\Id_\ell)D$.
    \begin{enumerate}
        \item[i)] $T$ in (\ref{guhn}) is self-adjoint.
        \item[ii)] If $A$ is contractive Neumann well-posed, then $T>0$.
    \end{enumerate}
\end{lemma}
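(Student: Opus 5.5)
The plan is to work entirely with the representation (\ref{guhn}) of $T$ as a rational function of the self-adjoint matrix $H$. By Lemma \ref{gemam}, $H$ is self-adjoint, and Neumann well-posedness gives $1\notin\sigma(H)$. Hence $\Id_\ell-H$ is invertible and $T=(\Id_\ell+H)(\Id_\ell-H)^{-1}$ is well defined. Before anything else I would check that (\ref{guhn}) really follows from (\ref{kortaew}). Since $S,D$ are real, $D^{-1/2}[S,D]D^{-1/2}$ is a real matrix, so $\overline{E}=\frac12 D^{-1/2}[S,D]D^{-1/2}-i\Id_\ell=-i(\Id_\ell+H)$. Indeed $\frac12 D^{-1/2}[S,D]D^{-1/2}=-iH$ by (\ref{HH}), so $-iH-i\Id_\ell=-i(\Id_\ell+H)$. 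Then $-E^{-1}\overline{E}=-(i(\Id_\ell-H))^{-1}(-i)(\Id_\ell+H)=(\Id_\ell-H)^{-1}(\Id_\ell+H)$. The factors commute because both are polynomials in $H$, which gives (\ref{guhn}).

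\emph{Step i).} Diagonalise $H=U\Lambda U^*$ with $U$ unitary and $\Lambda=\operatorname{diag}(\mu_1,\dots,\mu_\ell)$ real, which is possible since $H$ is self-adjoint. Then
\[
T=U\operatorname{diag}\Big(\tfrac{1+\mu_j}{1-\mu_j}\Big)U^*,
\]
with all $\mu_j\neq1$. The diagonal entries are real, so $T^*=T$. Equivalently, $T=f(H)$ for the real-valued rational function $f(\mu)=(1+\mu)/(1-\mu)$, which is defined on $\sigma(H)\subset\mathbb{R}\setminus\{1\}$, and a real function of a self-adjoint matrix is self-adjoint.

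\emph{Step ii).} Under contractive Neumann well-posedness, Lemma \ref{gemam} gives $\sigma(H)\subset(-1,1)$. For $\mu\in(-1,1)$ both $1+\mu>0$ and $1-\mu>0$, so $f(\mu)>0$. Therefore every eigenvalue of the self-adjoint matrix $T$ is positive, and $T>0$.

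I expect no real obstacle here. The only point needing care is the conjugation step: one must use that $D^{-1/2}$ and $[S,D]$ are real, so that $\overline{E}$ is as claimed, and that the factors in (\ref{guhn}) commute. Both points are settled by viewing everything as functions of the single self-adjoint matrix $H$.
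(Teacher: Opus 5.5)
Your proposal is correct and matches the paper's proof. In both, $T$ is the real rational function $\mu\mapsto(1+\mu)/(1-\mu)$ of the self-adjoint matrix $H$, well defined because $1\notin\sigma(H)$, hence self-adjoint, and positive definite once $\sigma(H)\subset(-1,1)$; your check that (\ref{guhn}) follows from (\ref{kortaew}) is also correct, though the paper already does that computation just before the lemma.
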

\begin{proof}
    By Lemma \ref{gemam} ii), $(\Id_\ell - H)^{-1}$ and $T$ in (\ref{guhn}) are well-defined. Since $H = H^*$, we get $T^* = T$ by functional calculus. We get $T>0$ from $\rho(H)<1$, using Lemma \ref{gemam} iii).
\end{proof}

Recall that $Z_\alpha$ in (5.4) of \cite{Tsopanopoulos2025} is given by
\begin{align*}
    Z_\alpha=\cos(\alpha)\Id_\ell+D^{1/2}SD^{1/2} \sin(\alpha)+iD\sin(\alpha)
\end{align*}
and put
\begin{align}\label{zege}
Z\coloneqq Z_{\pi/2} = D^{1/2}(S+i\Id_\ell)D^{1/2}.
\end{align}

We can formulate an identity relating formal positivity of $A$ to the numerical range of $ZT$.

\begin{proposition}\label{crucio}
Assume $A$ is Neumann well-posed. Using $S$, $D$, $P$, $E$, $T$ and $Z$ defined above, we have
    \begin{align*}
        P =  (D^{1/2}E)  Q(D^{1/2}E)^*\quad \text{for}\quad Q\coloneqq \frac{1}{2i}(ZT-(ZT)^*).
    \end{align*}
\end{proposition}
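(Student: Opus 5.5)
The plan is to prove the identity by direct algebraic expansion. First I will use the self-adjointness of $T$ to strip $T$ out of the expression, and then verify the remaining polynomial identity in $S$ and $D$. Set $K\coloneqq \frac12 D^{-1/2}[S,D]D^{-1/2}$. Since $S,D$ are real symmetric, $K$ is real and antisymmetric. We have $E=K+i\Id_\ell$, hence $\overline{E}=K-i\Id_\ell$ and $E^*=-K-i\Id_\ell=-\overline{E}^{\,*}$-type relations; in terms of $H=iK$ these read $E=i(\Id_\ell-H)$, $\overline E=-i(\Id_\ell+H)$ and $E^*=-i(\Id_\ell-H)$.

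The first step removes $T$. By definition (\ref{kortaew}), $ET=-\overline E$. By Lemma \ref{bobosition} i), $T=T^*$, so $TE^*=(ET)^*=-\overline{E}^{\,*}=-E^T$. Since $E^T=K^T+i\Id_\ell=-K+i\Id_\ell=-\overline E$, this gives $TE^*=\overline E$. Therefore
\begin{align*}
(D^{1/2}E)Q(D^{1/2}E)^*
&=\frac{1}{2i}D^{1/2}\bigl(EZTE^*-ET Z^*E^*\bigr)D^{1/2}\\
&=\frac{1}{2i}D^{1/2}\bigl(EZ\overline E+\overline E Z^*E^*\bigr)D^{1/2},
\end{align*}
and $T$ has disappeared. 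The two summands are adjoints of one another, because $(EZ\overline E)^*=\overline E^{\,*}Z^*E^*=E^TZ^*E^*=-\overline E Z^*E^*$. Hence the right-hand side equals $\Im$-part-type expression $\frac{1}{2i}(X-X^*)$ with $X=D^{1/2}EZ\overline E D^{1/2}$, which makes it manifestly Hermitian, as $P$ is.

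The second step is the expansion. I substitute $Z=D^{1/2}(S+i\Id_\ell)D^{1/2}$ and $E=K+i\Id_\ell$, and use the identity
\begin{align*}
D^{1/2}KD^{1/2}=\tfrac12[S,D]=\tfrac12(SD-DS).
\end{align*}
With $G\coloneqq D^{1/2}ED^{1/2}=\tfrac12(SD-DS)+iD$ and $\overline G=\tfrac12(SD-DS)-iD$, this yields
\begin{align*}
X=G\,(S+i\Id_\ell)\,\overline G .
\end{align*}
So I only need to compute $\frac{1}{2i}\bigl(G(S+i)\overline G-(G(S+i)\overline G)^*\bigr)$. Writing $R=\tfrac12(SD-DS)$, which satisfies $R^T=-R$, we get $G^*=-R-iD=-\overline G$, and hence $(G(S+i)\overline G)^*=G(S-i)\overline G$ up to the sign bookkeeping. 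The difference then collapses to $2i\,G\overline G$ plus cross terms. Expanding $G\overline G$ and the terms involving $S$ produces $DS^2D+D^2$-type terms together with $RSD$, $DSR$ and $R^2$ terms. These should regroup into $D(S^2+\Id_\ell)D-\frac14(SD+DS)^2=P$ from (\ref{boyance}), using $R=\frac12(SD-DS)$ and $\frac12(SD+DS)=SD-R$.

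The main obstacle is this final regrouping: the signs coming from the antisymmetry of $R$, and the non-commutativity of $S$ and $D$, must produce exactly $-\frac14(SD+DS)^2$. My approach is to express everything in terms of $SD$ and $DS$, using $(SD)^T=DS$. I would then compare coefficients of the noncommutative monomials $DS^2D$, $SD^2S$, $SDSD$, $DSDS$ and $D^2$ on both sides. If the signs fail to match, that would point to an error in the step $TE^*=\overline E$, which I would recheck first.
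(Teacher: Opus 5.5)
Your first step is correct and is exactly what the paper does. From $T=T^*$ you get $TE^*=-E^T=\overline E$, so $T$ drops out and the right-hand side becomes $\frac{1}{2i}(X-X^*)$ with $X=G(S+i\Id_\ell)\overline G$, $G=R+iD$, $R=\frac12[S,D]$. The paper's $R$ is your $X$, and its $X$ is your $R$.

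\textbf{The gap: the adjoint of $X$ is wrong.} The gap is in the second step, which carries the whole content of the identity and which you leave at ``should regroup''. Your claims $G^*=-\overline G$ and $X^*=G(S-i\Id_\ell)\overline G$ ``up to sign bookkeeping'' are both false. Since $R$ is real antisymmetric and $D$ is real symmetric, $G$ and $\overline G$ are both skew-Hermitian: $G^*=-G$ and $\overline G^{\,*}=-\overline G$. Hence
\[
X^*=\overline G^{\,*}(S-i\Id_\ell)G^*=\overline G\,(S-i\Id_\ell)\,G ,
\]
with $G$ and $\overline G$ interchanged. This is a change of order, not of sign.

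With your formula the difference would be exactly $2iG\overline G$, with no cross terms. That would give $\frac1{2i}(X-X^*)=G\overline G=R^2+D^2+i(DR-RD)$. This matrix is not Hermitian unless $D$ and $R$ commute. As a noncommutative polynomial, its $DS^2D$-coefficient is $-\frac14$, whereas $P$ has $\frac34$. So the regrouping you hope for cannot succeed along the route you describe. Your fallback of rechecking $TE^*=\overline E$ would point at a step that is in fact correct.

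\textbf{The repair.} Since $R,S,D$ are real, $X^T=\overline G^{\,T}(S+i\Id_\ell)G^T=(-G)(S+i\Id_\ell)(-\overline G)=X$. Therefore $X^*=\overline X$, and $\frac1{2i}(X-X^*)$ is the entrywise imaginary part of $(R+iD)(S+i\Id_\ell)(R-iD)$. Collecting the terms with an odd number of factors $i$ gives
\[
\tfrac1{2i}(X-X^*)=D^2+R^2+DSR-RSD .
\]
Substituting $R=\frac12(SD-DS)$ yields $D^2+\frac34DS^2D-\frac14SD^2S-\frac14(SDSD+DSDS)$. This is exactly the expansion of $P=D(S^2+\Id_\ell)D-\frac14(SD+DS)^2$. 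It is the paper's computation, and once this step is corrected and carried out, your proof coincides with the paper's.
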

In particular, if $P>0$, we have $\frac{1}{2i} (ZT-(ZT)^*)>0$, which translates to $W(ZT)\subset \operatorname{UHP}$ by \textbf{N7}.
\begin{proof}
First, note that 
\begin{align}\label{lbora}
    T E^* =   (\Id_\ell + H ) (\Id_\ell - H)^{-1}  (-i(\Id_\ell - H)) = -i (\Id_\ell + H )=- E^T.
\end{align}
Next, write
\begin{align}\label{kuru}
    (D^{1/2}E)Q(D^{1/2}E)^* = &\frac{1}{2i}( R-R^* ) ,\quad \text{where }R = D^{1/2}EZTE^* D^{1/2}.
\end{align}
Using (\ref{lbora}) and (\ref{zege}), we obtain
\begin{align*}
    R = - D^{1/2}EZ E^T D^{1/2} = - D^{1/2}ED^{1/2} (S+i\Id_\ell)D^{1/2} E^T D^{1/2} 
    = - (X +i D)(S+i\Id_\ell)(-X +i D)
\end{align*} 
for $X=\frac{1}{2}[S,D]$. Note that, using $X^* = -X$ and symmetry of $S,D$,
\begin{align*}
    R^* = (X-iD)(S-i\Id_\ell)(X+iD).
\end{align*}
In (\ref{kuru}), the terms with odd number of $i$ cancel, and we get 
\begin{align*}
    \frac{1}{2i}( R-R^* ) = D^2 + X^2 + DSX -XSD = D(S^2+\operatorname{Id})D-\frac{1}{4}(SD+DS)^2 = P.
    \end{align*}
Here, the second equality is a straightforward expansion.
\end{proof}

Finally, we can formulate and prove the analogous result for Neumann boundary conditions. 

\begin{theorem}\label{main neu}
    Consider a formal positive elliptic tuple $A=(A_{11},A_{12},A_{22})$. Define
    \begin{align*}
        \Lambda_\alpha:=\{\lambda \in \mathbb{C}\setminus \{0\}: \exists\, r^\lambda v\neq 0 \text{ solving (3.3) in \cite{Tsopanopoulos2025} with angle $\alpha$ and Neumann b.c.}\}.
    \end{align*}
    Then:
   \begin{enumerate}[label=\roman*)]
    \item $\Lambda_\alpha \subset \{\lambda \in \mathbb{C}:|\Re \lambda| > 1\}~$ for $~0<\alpha<\pi$.
    \item $\Lambda_\pi =\mathbb{Z}\setminus \{0\}$.
    \item $\Lambda_\alpha \subset \{\lambda \in \mathbb{C}:|\Re \lambda| > \frac{1}{2}\}~$ for $~\pi<\alpha<2\pi$.
    \item $\Lambda_{2\pi} =\frac{1}{2}\mathbb{Z}\setminus \{0\}$.
\end{enumerate}
\end{theorem}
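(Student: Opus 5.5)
The plan is to reduce the Neumann condition $0\in\sigma(M_{\lambda,\alpha})$ from \cite[Prop.~5.4]{Tsopanopoulos2025} to the abstract statement of Theorem \ref{dirri}, with a suitable matrix $F_\alpha$ satisfying $W(F_\alpha)\subset\operatorname{UHP}$. This is the route used for Dirichlet boundary conditions.

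First, formal positivity gives contractive Neumann well-posedness by (\ref{lukeli}). Hence Lemma \ref{bobosition} shows that $T=-E^{-1}\overline E$ is self-adjoint and positive definite, so $T^{1/2}$ and $T^{-1/2}$ exist. Also $P>0$ by Lemma \ref{gemam}, so Proposition \ref{crucio} gives $W(ZT)\subset\operatorname{UHP}$.

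Next I rewrite the Neumann matrix. In \cite[Prop.~5.4]{Tsopanopoulos2025} it is built from $E$, $\overline E$ and the powers $Z_\alpha^{\lambda}$, $\overline{Z_\alpha}^{\,\lambda}$. After multiplying by $E^{-1}$ from the left, the factor $E^{-1}\overline E=-T$ appears. I expect the condition to take the form $0\in\sigma\big(Z_\alpha^\lambda - T\,\overline{Z_\alpha}^{\,\lambda}\,T^{-1}\big)$, or an equivalent arrangement. Conjugating by $T^{1/2}$ and using $f(TXT^{-1})=Tf(X)T^{-1}$ for the functional calculus, I aim for the form
\[
0\in\sigma\big(F_\alpha^{\lambda}-(F_\alpha^{*})^{\lambda}\big)
\]
for some $F_\alpha$ similar to $Z_\alpha T$. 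For $\alpha=\pi/2$ the candidate is $F=ZT$ or a similarity transform of it. The aim is to match exactly the structure $F^\lambda-(F^*)^\lambda$ of Theorem \ref{dirri}; here self-adjointness of $T$ is what turns $\overline E$ into an adjoint.

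Then I handle the angle dependence as in \cite[Section~6]{Tsopanopoulos2025}. The identity $Z_\alpha=\cos\alpha\,\Id_\ell+\sin\alpha\,(Z-\frac{i}{2}\ldots)$, i.e.\ that $Z_\alpha$ is an affine function of $D^{1/2}(S+i\Id_\ell)D^{1/2}$, together with the positivity of $T$, should give $W(F_\alpha)\subset\operatorname{UHP}$ for $0<\alpha<\pi$. For such $\alpha$, Theorem \ref{dirri} excludes $0<|\Re\lambda|\le 1$, which is i).

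For $\pi<\alpha<2\pi$, I write the exponentials in $\alpha$ as squares of those for $\alpha/2$, so that $Z_\alpha^\lambda$ corresponds to $Z_{\alpha/2}^{2\lambda}$ in the appropriate sense. Applying Theorem \ref{dirri} with $F_{\alpha/2}$ and exponent $2\lambda$ then gives $|\Re\lambda|>\frac12$, which is iii). For $\alpha=\pi$ we have $Z_\pi=-\Id_\ell$, and for $\alpha=2\pi$ we have $Z_{2\pi}=\Id_\ell$ up to the branch bookkeeping. In these cases the matrix collapses to $(e^{i\pi\lambda}-e^{-i\pi\lambda})\cdot(\text{invertible})$, respectively the same with $2\pi\lambda$. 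This vanishes exactly for $\lambda\in\mathbb Z\setminus\{0\}$, respectively $\lambda\in\frac12\mathbb Z\setminus\{0\}$, giving ii) and iv).

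The main obstacle is the second step: identifying precisely how $T$ enters the Neumann matrix, and checking that the resulting $F_\alpha$ has numerical range in $\operatorname{UHP}$ for every $\alpha\in(0,\pi)$, not only for $\alpha=\pi/2$. Proposition \ref{crucio} controls $W(ZT)$, but $W$ is not invariant under similarity by $T^{1/2}$. I would first try to show that the imaginary part of $Z_\alpha T$, taken in the sense of $\frac{1}{2i}(X-X^*)$, equals $\sin\alpha\cdot Q$ plus a term vanishing because $T=T^*$. If that fails, I would rework the congruence in Proposition \ref{crucio} with $Z_\alpha$ in place of $Z$.
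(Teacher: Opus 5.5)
Your outline matches the paper's: rewrite $M_{\lambda,\alpha}$ using $\overline E=-ET$, conjugate by $T^{1/2}$, and invoke Theorem~\ref{dirri}. However, the step you flag as the main obstacle is left unresolved, and it rests on a misidentification. Conjugating by $T^{1/2}$ produces $F_\alpha=\tilde Z_\alpha=T^{-1/2}Z_\alpha T^{1/2}$. This matrix is similar to $Z_\alpha$, not (in general) to $Z_\alpha T$. The missing observation is that the same matrix can be written as
\[
\tilde Z_\alpha=T^{-1/2}(Z_\alpha T)\,T^{-1/2},
\]
which is a \emph{congruence} of $Z_\alpha T$ by the self-adjoint invertible matrix $T^{-1/2}$. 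Hence $\Im\langle \tilde Z_\alpha x,x\rangle=\Im\langle Z_\alpha T y,y\rangle$ with $y=T^{-1/2}x$. So $W(Z_\alpha T)\subset\operatorname{UHP}$ implies $W(\tilde Z_\alpha)\subset\operatorname{UHP}$ (this is \textbf{N4}), and similarity invariance of $W$ is never needed. Your first idea for the angle dependence then closes the argument. Indeed $Z_\alpha=\cos\alpha\,\Id_\ell+\sin\alpha\,Z$ exactly, so $\frac{1}{2i}\big(Z_\alpha T-(Z_\alpha T)^*\big)=\sin\alpha\,Q$ because $T=T^*$, and $Q>0$ by Proposition~\ref{crucio}. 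No reworking of Proposition~\ref{crucio} is required.

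Part iii) fails as you propose it. One has
\[
Z_{\beta}^2=\cos(2\beta)\Id_\ell+\sin(2\beta)Z+\sin^2\beta\,(Z^2+\Id_\ell),
\]
so $Z_{\alpha/2}^2=Z_\alpha$ only when $Z^2=-\Id_\ell$. Since $\sigma(Z)\subset\operatorname{UHP}$, this forces $Z=i\Id_\ell$, i.e.\ $S=0$ and $D=\Id_\ell$. The half-angle identity $e^{i\alpha}=(e^{i\alpha/2})^2$ from the isotropic case does not transfer, and $F_{\alpha/2}^{2\lambda}$ is unrelated to the Neumann matrix at angle $\alpha$. The paper instead takes a square root of the \emph{same} matrix. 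For $\pi<\alpha<2\pi$ the congruence argument puts $W(\tilde Z_\alpha)$ in the lower half-plane, since now $\sin\alpha<0$. Then $Y_\alpha\coloneqq-\tilde Z_\alpha^{1/2}$ satisfies $W(Y_\alpha)\subset\operatorname{UHP}$ by Lemma~\ref{dopamine}~i) with exponent $\frac12$, applied to $\tilde Z_\alpha^*$ and followed by taking adjoints. Tracking the branches gives $\tilde Z_\alpha^{\lambda_+}-(\tilde Z_\alpha^*)^{\lambda_-}=Y_\alpha^{2\lambda}-(Y_\alpha^*)^{2\lambda}$. Theorem~\ref{dirri} applied to $Y_\alpha$ with exponent $2\lambda$ then yields $|\Re\lambda|>\frac12$. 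This is exactly where the fractional-power estimate is indispensable, and your proposal has no substitute for it.

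Two smaller points. First, the paper begins by reducing to monic tuples via Lemma~4.1 of the earlier paper, which you skip. Second, Prop.~5.4 there covers only $0<\alpha<2\pi$, so iv) is obtained by a limiting argument $\alpha\to 2\pi$. Your ``branch bookkeeping'' at $Z_{2\pi}=\Id_\ell$ has to amount to this.
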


Some arguments in the proof are similar to the ones for the Dirichlet case in \cite[Thm. 7.1]{Tsopanopoulos2025} and are not repeated in full generality.
\begin{proof}
By \cite[Lemma 4.1]{Tsopanopoulos2025} and since the monic reduction preserves formal positivity, it suffices to prove the result for monic formal positive tuples. For such a tuple, let $V=(S+i \Id_\ell)D$ denote its standard root. By Lemma \ref{gemam} iv) and Lemma \ref{bobosition} ii), we have $P>0$ and $T>0$.

By \cite[Prop. 5.4]{Tsopanopoulos2025}, the model problem with Neumann boundary conditions admits a solution $r^\lambda v\neq 0$ for $\lambda \in \mathbb{C}\setminus \{0\}$ and $0<\alpha<2\pi$ if and only if $0\in \sigma(M_{\lambda,\alpha})$ for 
    \begin{align}\label{neumi}
        M_{\lambda,\alpha} =  E~ Z_\alpha^{\lambda_+} E^{-1}- \overline{E} ~\overline{Z_\alpha}^{\lambda_-} \overline{E}^{-1},\quad \text{where }E=i(\Id_\ell - H).
\end{align}
By (\ref{kortaew}) we have $\overline{E} = -ET$, so we can write
\begin{align}\label{neumo}
        M_{\lambda,\alpha} = E ( Z_{\alpha}^{\lambda_+} - T \overline{Z_\alpha}^{\lambda_-} T^{-1} )E^{-1}.
\end{align}
Putting $\tilde{Z}_\alpha \coloneqq T^{-1/2} Z_\alpha T^{1/2}$ and using $\overline{Z_\alpha} = Z_\alpha^*$, $T^* = T$, and the functional calculus, this can be rewritten as
\begin{align}\label{neumi2}
    M_{\lambda,\alpha} = E T^{1/2} ( \tilde{Z}_{\alpha}^{\lambda_+} - (\tilde{Z}_{\alpha}^*)^{\lambda_-} )T^{-1/2} E^{-1}
\end{align}
so that $0\in \sigma(M_{\lambda,\alpha})$ is equivalent to
\begin{align}\label{neumi3}
    0\in \sigma(\tilde{Z}_{\alpha}^{\lambda_+} - (\tilde{Z}_{\alpha}^*)^{\lambda_-} ).
\end{align}

i) $0<\alpha<\pi$.
First, we claim that $W(\tilde{Z}_\alpha)\subset \operatorname{UHP}$ for $0<\alpha<\pi$. Note that 
\begin{align*}
    W(\tilde{Z}_{\alpha}) = W(T^{-1/2} Z_{\alpha}T T^{-1/2} ) = W'( Z_{\alpha}T  ),
\end{align*}
where \textbf{N4} is used. Now $W(Z_\alpha T)\subset \operatorname{UHP}$ follows from $0<\alpha<\pi$, Prop. \ref{crucio} and 
\begin{align*}
    Z_\alpha T-(Z_\alpha T)^* = \sin(\alpha) (ZT-T Z^*).
\end{align*}
This proves $W(\tilde{Z}_\alpha)\subset \operatorname{UHP}$. In particular, $\sigma(\tilde{Z}_\alpha)\subset \operatorname{UHP}$ by \textbf{N3}. Thus, as in the case of Dirichlet boundary conditions, one can replace $\lambda_\pm=\lambda$ in (\ref{neumi3}). The result for i) follows from Theorem \ref{dirri}. 

ii) $\alpha=\pi$. In this case, $Z_\pi=-\Id_\ell$, and the result follows as in the Dirichlet case, since (\ref{neumi}) reads
\begin{align*}
   E~ (-1)^{\lambda_+} E^{-1}- \overline{E} ~(-1)^{\lambda_-} \overline{E}^{-1} = 2i\sin(\lambda \pi).
\end{align*}

iii) $\pi<\alpha<2\pi$.

By the same argument as in the Dirichlet case, there is some $Y_\alpha\in \Mat_\ell(\mathbb{C})$ such that $Y_\alpha = - \tilde{Z}_\alpha^{1/2}$, $W(Y_\alpha)\subset \operatorname{UHP}$, and $\tilde{Z}_{\alpha}^{\lambda_+} - (\tilde{Z}_{\alpha}^*)^{\lambda_-} = Y_{\alpha}^{2\lambda} - (Y_{\alpha}^*)^{2\lambda}$. The result for iii) follows from (\ref{neumi3}) and Theorem \ref{dirri}.

iv)  $\alpha=2\pi$.
The proof is the same limiting argument as in the Dirichlet case and not repeated here.
\end{proof}
\begin{remark}
    Note that \cite[Lemma 3.5]{Tsopanopoulos2025} covers the case $\lambda=0$. In this case, only constant solutions are possible for Neumann boundary conditions which do not occur for Dirichlet or mixed boundary conditions.
\end{remark}

\subsection*{Disclosure of AI Use}
During the preparation and writing of this paper, the author used OpenAI's ChatGPT 5.5 as an AI tool. The tool was used to improve the exposition, identify possible errors, and support the exploration and development of some of the mathematical arguments. The author critically reviewed and verified all AI-assisted contributions and is fully responsible for the final content of this work.

\end{document}